\documentclass[12pt,a4paper,oneside]{amsart}

\usepackage{amsfonts, amsmath, amssymb, amsthm, amscd, hyperref}

\usepackage{graphicx}

\usepackage{anysize}
\marginsize{2cm}{2cm}{2cm}{2cm}

\newtheorem{theorem}{Theorem}
\newtheorem{lemma}{Lemma}

\newtheorem{corollary}{Corollary}

\theoremstyle{definition}
\newtheorem{definition}{Definition}
\newtheorem*{definition*}{Definition}

\theoremstyle{remark}
\newtheorem{remark}{Remark}


\DeclareMathOperator{\Pow}{Pow}
\DeclareMathOperator{\Vol}{Vol}
\DeclareMathOperator{\grad}{grad}

\begin{document}

\ifpdf
\DeclareGraphicsExtensions{.pdf, .jpg, .tif, .mps}
\else
\DeclareGraphicsExtensions{.eps, .jpg, .mps}
\fi

\title{Some Remarks on the Circumcenter of Mass}
\author{Arseniy V. Akopyan}

\thanks{Institute for Information Transmission Problems RAS\\ 
			Bolshoy Karetny per. 19, Moscow, Russia 127994  \\        
B.~N.~Delone International Laboratory ``Discrete and Computational Geometry'', P.~G~ Demidov Yaroslavl State University., Sovetskaya st. 14, Yaroslavl', Russia 150000
\\
\email{akopjan@gmail.com}}

\date{Received: date / Accepted: date}

\maketitle

\begin{abstract}
In this article we give new proofs for the existence and basic properties of the cirucmcenter of mass defined by V.~E.~Adler in \cite{adler1993recuttings} and S.~Tabachnikov and E.~Tsukerman in~\cite{tabachnikov2014circumcenter}.
\end{abstract}

We start with definitions.

\begin{definition}
	\label{def:power of point}
	\emph{The power of a point} $\mathbf{x}$ with respect to a sphere $\omega(\mathbf{o}, R)$ in $\mathbb{R}^d$ is defined $\Pow(\omega, \mathbf{x})=\|\mathbf{ox}\|^2-R^2$. Here $\mathbf{o}$ is the center and $R$ the radius of the sphere $\omega(\mathbf{o}, R)$.
\end{definition}

\begin{definition}
	\label{def:power of simplex}
	Given simplex $\Delta$ in $\mathbb{R}^d$, define
	\[
	\Pow(\Delta)=\int\limits_\Delta \Pow(\omega_\Delta, \mathbf{x})d\mathbf{x},
	\]
	where $\omega_\Delta$ is the circumsphere of $\Delta$.
\end{definition}

\begin{remark}
	If the sphere $\omega$ is a sphere of higher dimension passing through all vertices of $\omega_\Delta$ then the power of any point of $\Delta$ with respect to $\omega$ is the same as the power with respect to $\omega_{\Delta}$.
	Therefore in the definition of $\Pow(\Delta)$ the circumcscribed sphere could be changed to any sphere passing through vertices of $\Delta$.
\medskip

	Note also, that the value of $\Pow(\Delta)$ is always negative.
\end{remark}

Denote the vertices of $\Delta$ by $\mathbf{v}_0$, $\mathbf{v}_1$, $\dots$, $\mathbf{v}_d$, let $\mathbf{o}_\Delta$ and $R_\Delta$ be the center and the radius of the circumsphere, and let $\mathbf{m}_\Delta$ be the centroid of $\Delta$.
Then one has the following formulas for $\Pow(\Delta)$ (see.~\cite{Rajan1994Optimality}),

\begin{equation*}
		-\Pow(\Delta)=\frac{\Vol(\Delta)}{(d+1)(d+2)}
		\left(\sum_{i=0}^{d} \sum_{j=0}^{i-1} \|\mathbf{v}_i\mathbf{v}_j\|^2 \right)=\frac{d+1}{d+2}\Vol(\Delta)\left(R_\Delta^2-\|\mathbf{o}_\Delta \mathbf{m}_\Delta\|^2 \right),
\end{equation*}

where $\Vol(\Delta)$ is the volume of $\Delta$.

\begin{lemma}
	\label{lem:base equality}
	Given a simplex $\Delta$ in $\mathbb{R}^d$, denote by $\vec{\mathbf{n}}_i$ the unit normal to the hyperface $\Delta_i$ directed in the exterior of $\Delta$.
	Then
	\[
	\sum\limits_{i=0}^d \Pow(\Delta_i)\vec{\mathbf{n}}_i=
	2\Vol(\Delta) \overrightarrow{\mathbf{o}_\Delta \mathbf{m}_\Delta}.
	\]
\end{lemma}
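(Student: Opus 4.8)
The plan is to reinterpret the left-hand side as a single surface integral over the boundary $\partial\Delta$ and then apply the vector form of the divergence theorem. The starting point is the Remark following Definition~\ref{def:power of simplex}: since the circumsphere $\omega_\Delta$ of the whole simplex passes through every vertex of each hyperface $\Delta_i$, the power function $\Pow(\omega_\Delta,\cdot)$ restricted to $\Delta_i$ agrees with the power taken with respect to the circumsphere of $\Delta_i$ itself. Hence, writing
\[
f(\mathbf{x})=\Pow(\omega_\Delta,\mathbf{x})=\|\mathbf{x}-\mathbf{o}_\Delta\|^2-R_\Delta^2,
\]
I would first record that $\Pow(\Delta_i)=\int_{\Delta_i} f(\mathbf{x})\,dS(\mathbf{x})$ for every $i$, where $dS$ is the $(d-1)$-dimensional surface measure. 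The crucial gain is that one and the same function $f$ now serves all the faces simultaneously.

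With this in hand, the left-hand side becomes the boundary integral of the scalar $f$ weighted by the outward unit normal $\vec{\mathbf{n}}$:
\[
\sum_{i=0}^d \Pow(\Delta_i)\vec{\mathbf{n}}_i
=\sum_{i=0}^d \vec{\mathbf{n}}_i \int_{\Delta_i} f\,dS
=\oint_{\partial\Delta} f(\mathbf{x})\,\vec{\mathbf{n}}\,dS.
\]
Applying the divergence theorem componentwise to the vector fields $f\,\mathbf{e}_k$ (equivalently, the gradient form of Gauss's theorem) converts this into a volume integral,
\[
\oint_{\partial\Delta} f\,\vec{\mathbf{n}}\,dS=\int_\Delta \grad f\,d\mathbf{x}.
\]

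It then remains only to compute. Differentiating gives $\grad f(\mathbf{x})=2(\mathbf{x}-\mathbf{o}_\Delta)$, so that
\[
\int_\Delta \grad f\,d\mathbf{x}=2\int_\Delta(\mathbf{x}-\mathbf{o}_\Delta)\,d\mathbf{x}
=2\bigl(\Vol(\Delta)\,\mathbf{m}_\Delta-\Vol(\Delta)\,\mathbf{o}_\Delta\bigr)
=2\Vol(\Delta)\,\overrightarrow{\mathbf{o}_\Delta\mathbf{m}_\Delta},
\]
where I use the defining property $\int_\Delta \mathbf{x}\,d\mathbf{x}=\Vol(\Delta)\,\mathbf{m}_\Delta$ of the centroid. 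This is exactly the right-hand side.

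The substantive step, and the one I would be most careful to justify, is the first: that the power of the face $\Delta_i$ — defined through its own circumsphere, which is a $(d-1)$-dimensional sphere — may be evaluated using the ambient circumsphere $\omega_\Delta$. Once this reduction to a single $f$ is granted, everything downstream is a routine application of the divergence theorem and the centroid formula. A secondary point worth stating explicitly is that the exterior normals $\vec{\mathbf{n}}_i$ are precisely the orientation demanded by Gauss's theorem, so no sign bookkeeping intervenes.
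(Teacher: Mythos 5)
Your proof is correct and follows essentially the same route as the paper: both apply the gradient form of the Gauss--Ostrogradsky theorem to the power function $f(\mathbf{x})=\Pow(\omega_\Delta,\mathbf{x})$ and both invoke the remark that the ambient circumsphere, since it passes through the vertices of each hyperface $\Delta_i$, evaluates $\Pow(\Delta_i)$. The only cosmetic difference is that the paper translates $\mathbf{o}_\Delta$ to the origin while you carry it along explicitly, which changes nothing of substance.
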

\begin{proof}
	Without loss of generality, assume that $\mathbf{o}_\Delta$ is the origin.
	
	Let us use the following variant of the Gauss--Ostrogradsky theorem, also known as the gradient theorem:	
	\[
	\int\limits_{\Delta} \grad f(\mathbf{x})dv= \int \limits_{\partial \Delta} f(\mathbf{x}) \vec{\mathbf{n}}(\mathbf{x}) ds, 
	\]
	where $dv$ and $ds$ are the volume elements of total space and of the surface of the simplex, and $\vec{\mathbf{n}}(\mathbf{x})$ is the unit normal to the surface at a point $\mathbf{x}$.

	Apply this equation to the power of a point with respect to the circumsphere, $f(\mathbf{x})=\|\mathbf{x}\|^2-R_\Delta^2$.
	Then $\grad f(\mathbf{x})=2\mathbf{x}$. 
	Note also that $\displaystyle \int_{\Delta_i} f(\mathbf{x})ds=\Pow(\Delta_i)$ since the sphere $(\mathbf{o}, R_\Delta)$ passes through the vertices of $\Delta_i$.
	We obtain
	\[
		2\Vol(\Delta) \overrightarrow{\mathbf{o}_\Delta \mathbf{m}_\Delta}=\int\limits_{\Delta} 2\mathbf{x}dv= \int \limits_{\partial \Delta} f(\mathbf{x})\vec{\mathbf{n}}(\mathbf{x}) ds=\sum\limits_{i=0}^d \Pow(\Delta_i)\vec{\mathbf{n}}_i.
	\]
\end{proof}

\begin{corollary}
	\label{cor:main theorem}
	Let $\mathcal{C}$ be a $d$-dimensional piece-wise linear simplicial cycle in $\mathbb{R}^d$.
	Let $\mathbf{o}_i$ and $\mathbf{m}_i$ be circumcenters and centroids of $d$-dimensional simplices $\Delta_i\in \mathcal{C}$.
	Then 
	\[
	\sum\limits_{\Delta_i\in \mathcal{C}} \overrightarrow{\mathbf{o}_i \mathbf{m}_i}\Vol(\Delta_i)=\mathbf{0}.
	\]
\end{corollary}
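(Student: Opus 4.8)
The plan is to apply Lemma~\ref{lem:base equality} to every top-dimensional simplex of $\mathcal{C}$ and sum, using the cycle condition to kill the boundary contributions. For each $\Delta_i\in\mathcal{C}$, denote its hyperfaces by $F$ and the corresponding outward unit normals by $\vec{\mathbf{n}}_{i,F}$; identifying $\mathbf{o}_i=\mathbf{o}_{\Delta_i}$ and $\mathbf{m}_i=\mathbf{m}_{\Delta_i}$, Lemma~\ref{lem:base equality} reads
\[
2\Vol(\Delta_i)\,\overrightarrow{\mathbf{o}_i\mathbf{m}_i}=\sum_{F\subset\partial\Delta_i}\Pow(F)\,\vec{\mathbf{n}}_{i,F}.
\]
Summing over all $\Delta_i\in\mathcal{C}$ and interchanging the order of summation, I would regroup the right-hand side by the $(d-1)$-dimensional faces $F$, collecting for each $F$ the contributions coming from the simplices incident to it. Note that one cannot instead add up the left-hand integrals $\int_{\Delta_i}2\mathbf{x}\,dv$ directly, since each is computed with its own circumcenter $\mathbf{o}_i$ as origin; passing to the boundary form is precisely what makes a global cancellation possible.

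The essential observation is that the scalar $\Pow(F)$ attached to a face $F$ does not depend on which incident simplex is used to produce it. Although two neighbouring simplices $\Delta_i$ and $\Delta_j$ have different circumspheres, both of these spheres pass through all vertices of the shared face $F$. By the Remark following Definition~\ref{def:power of simplex}, the power of any point of $F$ is therefore the same with respect to either circumsphere, so the integral $\int_F\Pow(\omega,\mathbf{x})\,ds$ yields the same value $\Pow(F)$ in both cases. Hence $\Pow(F)$ is an intrinsic invariant of the face $F$, independent of its ambient simplex.

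Since $\mathcal{C}$ is a cycle, its boundary vanishes: each interior $(d-1)$-face is shared by the two incident simplices lying on opposite sides of it, so the two outward unit normals are antiparallel, $\vec{\mathbf{n}}_{i,F}=-\vec{\mathbf{n}}_{j,F}$, and no unmatched boundary faces remain. Combining this with the face-intrinsic nature of $\Pow(F)$, the contributions $\Pow(F)\,\vec{\mathbf{n}}_{i,F}$ and $\Pow(F)\,\vec{\mathbf{n}}_{j,F}$ cancel in pairs across every face. The regrouped right-hand side is thus $\mathbf{0}$, and dividing by $2$ gives $\sum_{\Delta_i\in\mathcal{C}}\overrightarrow{\mathbf{o}_i\mathbf{m}_i}\,\Vol(\Delta_i)=\mathbf{0}$.

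I expect the main obstacle to be the orientation bookkeeping rather than any analytic difficulty: one must pin down the notion of ``simplicial cycle'' so that $\partial\mathcal{C}=0$ genuinely forces the induced outward normals from the two sides of each interior facet to cancel, with no leftover boundary terms. The geometric heart of the argument is already supplied by the Remark, which guarantees the compatibility of the face powers across the differing circumspheres; once that is in hand, the result is a purely combinatorial telescoping of the boundary sum.
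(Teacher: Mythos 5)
Your proposal is correct and is precisely the argument the paper intends: the corollary is stated as an immediate consequence of Lemma~\ref{lem:base equality}, obtained by summing the lemma over all $\Delta_i\in\mathcal{C}$ and cancelling the facet contributions in pairs, with the Remark after Definition~\ref{def:power of simplex} guaranteeing (exactly as you argue) that $\Pow(F)$ is intrinsic to the shared face $F$ even though the two incident simplices have different circumspheres. The orientation bookkeeping you flag is likewise left implicit in the paper, so your write-up if anything makes the intended cancellation more explicit than the source does.
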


For the centroid, one has $\sum\limits_{\Delta_i\in \mathcal{C}} \vec{\mathbf{m}_i}\Vol(\Delta_i)=\mathbf{0}$, because each point is counted the same number of times with positive and negative sign.
So, we obtain the following corollary.

\begin{corollary}[V. E. Adler, S. Tabachnikov, E. Tsukerman]
	\label{cor:theorem for circumcenters}
	Let $\mathcal{C}$ be a $d$-dimensional piece-wise linear simplicial cycle in $\mathbb{R}^d$.
	Suppose $\mathbf{o}_i$ be the circumcenters of $d$-dimensional simplices $\Delta_i\in \mathcal{C}$.
	Then 
	\[
	\sum\limits_{\Delta_i\in \mathcal{C}} \vec{\mathbf{o}}\Vol(\Delta_i)=\mathbf{0}.
	\]
\end{corollary}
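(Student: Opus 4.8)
The plan is to obtain this statement directly from Corollary~\ref{cor:main theorem} and the centroid identity stated just above it, with no new geometric input required. The single algebraic observation is that the displacement vector of each simplex splits as a difference of position vectors: fixing an origin and writing $\vec{\mathbf{o}_i}$, $\vec{\mathbf{m}_i}$ for the position vectors of the circumcenter and centroid of $\Delta_i$, one has $\overrightarrow{\mathbf{o}_i\mathbf{m}_i} = \vec{\mathbf{m}_i} - \vec{\mathbf{o}_i}$. The strategy is to weight this identity by $\Vol(\Delta_i)$, sum over the cycle, and then peel off the two summands using the results already in hand.

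Carrying this out, by linearity of the sum I would write
\[
\sum_{\Delta_i \in \mathcal{C}} \overrightarrow{\mathbf{o}_i\mathbf{m}_i}\,\Vol(\Delta_i)
= \sum_{\Delta_i \in \mathcal{C}} \vec{\mathbf{m}_i}\,\Vol(\Delta_i)
- \sum_{\Delta_i \in \mathcal{C}} \vec{\mathbf{o}_i}\,\Vol(\Delta_i).
\]
Corollary~\ref{cor:main theorem} makes the left-hand side equal to $\mathbf{0}$, so rearranging gives $\sum \vec{\mathbf{o}_i}\,\Vol(\Delta_i) = \sum \vec{\mathbf{m}_i}\,\Vol(\Delta_i)$. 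It then remains only to confirm that the centroid sum on the right vanishes, after which the claim is immediate.

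For that last point I would justify the centroid identity carefully, since it is the one place where the cycle hypothesis enters independently of the circumsphere. The key is the interpretation $\vec{\mathbf{m}_i}\,\Vol(\Delta_i) = \int_{\Delta_i}\vec{\mathbf{x}}\,d\mathbf{x}$, so that the whole sum equals $\int_{\mathbb{R}^d}\vec{\mathbf{x}}\,\mu(\mathbf{x})\,d\mathbf{x}$, where $\mu(\mathbf{x})$ is the locally constant function counting, with orientation sign, the simplices of $\mathcal{C}$ covering $\mathbf{x}$. The jump of $\mu$ across any $(d-1)$-face equals the coefficient of that face in $\partial\mathcal{C}$; since $\mathcal{C}$ is a cycle we have $\partial\mathcal{C}=0$, so $\mu$ has no jumps, is globally constant, and is therefore identically $\mathbf{0}$ because $\mathcal{C}$ is compactly supported. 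Hence $\sum \vec{\mathbf{m}_i}\,\Vol(\Delta_i)=\mathbf{0}$ and the corollary follows.

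I expect this verification that the signed multiplicity function of a cycle vanishes to be the only delicate step; it is exactly the content of the phrase ``each point is counted the same number of times with positive and negative sign.'' The remainder is a one-line linear combination of Corollary~\ref{cor:main theorem} with the centroid identity, so no serious obstacle arises beyond making the multiplicity argument precise.
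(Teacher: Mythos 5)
Your proposal is correct and follows essentially the same route as the paper: combine Corollary~\ref{cor:main theorem} with the centroid identity $\sum_{\Delta_i\in\mathcal{C}}\vec{\mathbf{m}_i}\Vol(\Delta_i)=\mathbf{0}$, which the paper justifies in one sentence (``each point is counted the same number of times with positive and negative sign'') and which you usefully make precise via the signed multiplicity function. No gaps; your elaboration of the multiplicity argument is just a careful spelling-out of the paper's sketch, not a different method.
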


Following \cite{tabachnikov2014circumcenter}, we give the following definition.

\begin{definition}
	Let $\mathcal{K}$ be a $d$-dimensional piece-wise linear simplicial chain.
	Let $(\mathbf{o}_i, \Vol(\Delta_i))$ be the weighted point located at the circumcenter of $\Delta_i\in \mathcal{K}$ with the weight $\Vol(\Delta_i)$.
	The center of mass of points $(\mathbf{o}_i, \Vol(\Delta_i))$ of all simplices of $\mathcal{K}$ is called the \emph{circumcenter of mass} of $\mathcal{K}$.
\end{definition}

\begin{remark}
	\label{rem:(d-1)-cycle}
	We can define the circumcenter of mass of any $(d-1)$-dimensional piece-wise linear simplicial cycle $\mathcal{C}$ in $\mathbb{R}^d$ as the circumcenter of mass of any its filling, that is $\mathcal{K}$ such that $\partial \mathcal{K}= \mathcal{C}$. Due to Corollary~\ref{cor:theorem for circumcenters}, the choice of filling for $\mathcal{C}$ does not matter.
\end{remark}
\medskip

It seems that the first who noted the existence of circumcenter of mass of planar polygon was Giusto Bellavitis in 1834 (see the book \cite{laisant1887theorie}, pages 150--151).

In 1993 it was independently noticed by V.~E.~Adler in \cite{adler1993recuttings} for case of triangulation of planar polygon  by diagonals and in the private correspondence of G.C.~Shephard and B.~Gr\"unbaum. They also noted that the circumcenter could be replaced by any point on the Euler line, that is, by a fixed affine combination of the centroid and the circumcenter (for example, the orthocenter or the center of the Euler circle).

A.G~Myakishev in \cite{myakishev2006two} prove the existence of Euler (and also Nagel) line for quadrilateral.

S.~Tabachnikov and E.~Tsukerman in \cite{tabachnikov2014circumcenter} proved the correctness of definition of circumcenter of mass for any simplicial polytope and the existence of the Euler line in a high dimensional polytope.

The case of central triangulation of a tetrahedron was posed on the student contest IMC 2009 (Problem 5). 

In the planar case, we can take a polygon as a cycle. Using Lemma~\ref{lem:base equality} we can give a short proof of the following theorem proved by S.~Tabachnikov and E.~Tsukerman.

\begin{theorem}[\cite{tabachnikov2014circumcenter}]
	\label{thm:equileteral polygon}
	Let $P=\mathbf{a}_1\mathbf{a}_2\dots \mathbf{a}_n$ be an equilateral polygon.
	Then its circumcenter of mass coincide with centroid of the polygonal lamina.
\end{theorem}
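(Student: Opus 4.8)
The plan is to reduce the theorem to the vanishing of the single vector quantity $\sum_i \Vol(\Delta_i)\,\overrightarrow{\mathbf{o}_i\mathbf{m}_i}$, and then to evaluate that sum by adding up the identity of Lemma~\ref{lem:base equality} over a triangulation. First I would triangulate the lamina $P$ into triangles $\Delta_i$ in any convenient way (say, a fan from one vertex); by Corollary~\ref{cor:theorem for circumcenters} and Remark~\ref{rem:(d-1)-cycle} the circumcenter of mass is independent of this choice. Since the centroid of a region is the area-weighted average of the centroids of the pieces of any subdivision, the centroid of $P$ is $\frac{1}{\Vol(P)}\sum_i \mathbf{m}_i\Vol(\Delta_i)$ while its circumcenter of mass is $\frac{1}{\Vol(P)}\sum_i \mathbf{o}_i\Vol(\Delta_i)$. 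Hence the theorem is equivalent to the statement
\[
\sum_i \Vol(\Delta_i)\,\overrightarrow{\mathbf{o}_i\mathbf{m}_i}=\mathbf{0}.
\]

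Summing Lemma~\ref{lem:base equality} over all triangles turns the right-hand side into $2\sum_i \Vol(\Delta_i)\,\overrightarrow{\mathbf{o}_i\mathbf{m}_i}$, while the left-hand side becomes a sum of contributions $\Pow(e)\,\vec{\mathbf{n}}$ over all edges $e$ of all triangles, each with its outward unit normal. The key observation I would exploit is that the power of a one-dimensional simplex depends only on its length: specialising the displayed formula for $\Pow(\Delta)$ to $d=1$ (where $\Vol$ is the length, the double sum is the single term $\|e\|^2$, and $(d+1)(d+2)=6$) gives $\Pow(e)=-\tfrac16\|e\|^3$. Consequently each interior edge is shared by two triangles that assign it opposite outward normals but the same power, so its two contributions cancel; only the boundary edges, namely the sides of $P$, survive.

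It then remains to handle the boundary sum $\sum_k \Pow(e_k)\vec{\mathbf{n}}_k=-\tfrac16\sum_k \|e_k\|^3\,\vec{\mathbf{n}}_k$ over the sides $e_k$ of $P$, and this is exactly where the equilateral hypothesis enters. With all $\|e_k\|$ equal to a common value $\ell$, this reduces to $-\tfrac16\ell^3\sum_k \vec{\mathbf{n}}_k$. Finally I would use that for any closed polygon $\sum_k \|e_k\|\vec{\mathbf{n}}_k=\mathbf{0}$ — the outward unit normal is the edge vector rotated by a quarter turn, and the edge vectors of a closed polygon sum to zero — which for an equilateral polygon forces $\sum_k \vec{\mathbf{n}}_k=\mathbf{0}$. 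Thus the boundary sum vanishes and the desired identity follows. The one genuinely delicate point is the edge-power computation together with its triangle-independence; once $\Pow(e)=-\tfrac16\|e\|^3$ is in hand, the cancellation of interior edges and the reduction to $\sum_k\vec{\mathbf{n}}_k$ are automatic, and the equilateral condition does precisely the work needed to annihilate the remaining boundary term.
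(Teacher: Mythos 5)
Your proposal is correct and takes essentially the same route as the paper: the paper's proof also amounts to summing Lemma~\ref{lem:base equality} over a triangulation (with interior edges cancelling because $\Pow$ of an edge is intrinsic to the edge), noting that the equilateral hypothesis makes all boundary powers equal to a common value $p$, and annihilating $\sum_k \vec{\mathbf{n}}_k$ by viewing each unit normal as the quarter-turn rotation of $\frac{1}{l}\overrightarrow{\mathbf{a}_k\mathbf{a}_{k+1}}$, since the edge vectors of a closed polygon sum to zero. Your explicit evaluation $\Pow(e)=-\tfrac16\|e\|^3$ is simply the $d=1$ case of the displayed formula for $\Pow(\Delta)$, which the paper uses implicitly when asserting that all the $\Pow(\mathbf{a}_i\mathbf{a}_{i+1})$ coincide.
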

\begin{proof}
	Denote by $\mathbf{o}$ and $\mathbf{m}$ the circumcenter of mass and the centroid.
	Note that, for all $i$, the values $\Pow(\mathbf{a}_i\mathbf{a}_{i+1})$ are equal to each other. Denote this quantity by $p$ and let $l$ be the length of the sides.
	We have $\Vol(P)\overrightarrow{\mathbf{m} \mathbf{o}}=\frac12\sum\limits_{i=1}^n p\vec{\mathbf{n}}_i$.
	Note that this sum is equal to zero because each vector $\vec{\mathbf{n}}_i$ is the vector $\frac{1}{l}\overrightarrow {a_ia_{i+1}}$ rotated by $90^\circ$, but $\sum \limits_{i=1}^n{a_ia_{i+1}}=0$.
\end{proof}

Note that using the equation on $\Pow(\Delta)$ we can generalize this theorem to higher dimension.

\begin{theorem}
	\label{thm:equileteral polytope}
	Let $P$ be a simplicial polytope in $\mathbb{R}^d$, such that for each face of $P$ the sum of squares of its edges is a constant.
	Then the circumcenter of mass and centroid of solid polytope $P$ coincide.
\end{theorem}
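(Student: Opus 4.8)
The plan is to mirror the proof of Theorem~\ref{thm:equileteral polygon}, but to apply Lemma~\ref{lem:base equality} simplex-by-simplex over a filling of $P$, and to replace the planar identity $\sum_i\overrightarrow{\mathbf{a}_i\mathbf{a}_{i+1}}=\mathbf{0}$ by the vanishing of the total vector area of a closed surface.

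First I would record what the hypothesis buys us. Applying the displayed formula for $\Pow(\Delta)$ to a facet $F$ of $P$ — a simplex of dimension $d-1$, hence with $d$ vertices, so the factor $(d+1)(d+2)$ becomes $d(d+1)$ — gives
\[
-\Pow(F)=\frac{\Vol(F)}{d(d+1)}\sum_{\text{edges }e\text{ of }F}\|e\|^2=c\,\Vol(F),
\]
where by assumption the bracketed sum equals a constant $S$ independent of $F$, so that $c=-S/(d(d+1))$ is the same for every facet. Thus $\Pow(F)$ is a fixed multiple of the $(d-1)$-volume of $F$; this is precisely the analogue of the planar observation that all the $\Pow(\mathbf{a}_i\mathbf{a}_{i+1})$ coincide.

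Next, fix any filling $\mathcal{K}=\{\Delta_k\}$ of $\partial P$ whose boundary faces are exactly the facets of $P$, for instance the central triangulation obtained by coning every facet from a single interior point (by Remark~\ref{rem:(d-1)-cycle} the circumcenter of mass is independent of this choice). Summing Lemma~\ref{lem:base equality} over all $\Delta_k$,
\[
2\sum_k \Vol(\Delta_k)\,\overrightarrow{\mathbf{o}_k\mathbf{m}_k}=\sum_k\sum_{F\subset\Delta_k}\Pow(F)\,\vec{\mathbf{n}}_{F,k}.
\]
On the right, every interior face is shared by two simplices; since $\Pow(F)$ depends only on $F$ while the two outward normals are opposite, these terms cancel exactly as in Corollary~\ref{cor:main theorem}, leaving only the facets of $P$, each carrying the outward normal of $P$. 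Hence the right-hand side is $\sum_{F\subset\partial P}\Pow(F)\,\vec{\mathbf{n}}_F=c\sum_{F\subset\partial P}\Vol(F)\,\vec{\mathbf{n}}_F$. I would then invoke that the total vector area of a closed surface vanishes,
\[
\sum_{F\subset\partial P}\Vol(F)\,\vec{\mathbf{n}}_F=\int_{\partial P}\vec{\mathbf{n}}\,ds=\mathbf{0},
\]
which is the Gauss--Ostrogradsky theorem applied to a constant vector field. Therefore $\sum_k\Vol(\Delta_k)\,\overrightarrow{\mathbf{o}_k\mathbf{m}_k}=\mathbf{0}$; since the volume-weighted averages of the $\mathbf{o}_k$ and of the $\mathbf{m}_k$ are respectively the circumcenter of mass $\mathbf{o}_P$ and the centroid $\mathbf{m}_P$ of the solid, this sum equals $\Vol(P)\,\overrightarrow{\mathbf{o}_P\mathbf{m}_P}$, and the two centers coincide.

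The main point to handle carefully is organizational rather than computational: one must choose the filling so that it does not subdivide the facets of $P$, ensuring that the surviving boundary contributions are exactly the facets to which the constant-sum hypothesis applies. Granting this, the two nontrivial ingredients — the pairwise cancellation of interior faces and the closed-surface identity $\int_{\partial P}\vec{\mathbf{n}}\,ds=\mathbf{0}$ — are precisely the higher-dimensional counterparts of the steps already implicit in the planar argument.
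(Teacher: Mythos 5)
Your proof is correct and follows essentially the same route as the paper's: facet powers proportional to facet volumes under the hypothesis, combined with the vanishing of $\sum_{F\subset\partial P}\Vol(F)\,\vec{\mathbf{n}}_F$ (which the paper cites as Minkowski's theorem and you derive from the divergence theorem — the same fact). You merely make explicit the summation of Lemma~\ref{lem:base equality} over a filling with interior-face cancellation, which the paper leaves implicit from the planar case, and you correctly adjust the constant to $d(d+1)$ for $(d-1)$-dimensional facets where the paper keeps $(d+1)(d+2)$ — an immaterial difference since the constant plays no role.
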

\begin{proof}
	Denote this constant by $c$. 
	Note that for each facet $\Delta_i$ of $P$ we have:
	\[
	\Pow(\Delta_i)=\Vol(\Delta_i) \frac{-c}{(d+1)(d+2)}.
	\]
	
	From Minkowski's theorem it follows that 
	\[
	\sum_{\Delta_i \in (\text{facets of }P)}\Vol(\Delta_i) \mathbf{n}_i=0,
	\]
	where $\mathbf{n}_i$ is a unit normal to the facet $\Delta_i$.
	
	Therefore
	\begin{equation*}
		\Vol(P)\overrightarrow{\mathbf{m} \mathbf{o}}=
		\frac{1}{2} \sum_{\Delta_i \in (\text{facets of }P)}\Pow(\Delta_i) \mathbf{n}_i 
		=\frac{-c}{2(d+1)(d+2)}\sum_{\Delta_i \in (\text{facets of }P)}\Vol(\Delta_i) \mathbf{n}_i=0.
	\end{equation*}
\end{proof}

\begin{remark}
	Using another formula for $\Pow(\Delta_i)$ we can reformulate the requirements on the facets of the polytope $P$ by the following way:
	for each facet $\Delta_i$ the value $R_{\Delta_i}^2-\|\mathbf{o}_{\Delta_i} \mathbf{m}_{\Delta_i}\|^2$ is a constant.
\end{remark}

As the authors mention in \cite{tabachnikov2014circumcenter}, if the vertices of  $\mathcal{C}$ lie on a sphere $\omega$, then the circumcenter of mass coincide with center of the sphere. Indeed, there is a filling of $\mathcal{C}$ with the same set of vertices as $\mathcal{C}$.  The circumcenters of the simplices of this filling coincide the center of the sphere~$\omega$.

In the same article S.~Tabachnikov and E.~Tsukerman give a definition of cirumcenter of mass in the spherical geometry. Using the previous observation, we can give another explanation of the existence of this point.

Consider the unit sphere $\mathcal{S}^d$ with the center at the origin $\mathbf{o}$ of $\mathbb{R}^{d+1}$.
By a weighted point $(\mathbf{x}, m)$ we mean a pair consisting of a point $\mathbf{x}$ and a number $m$, which is natural to interpret as vector $m\vec{\mathbf{x}}$ in $\mathbb{R}^{d+1}$.
A set of weighted points $(\mathbf{x}_i, m_i)$ has the centroid at point $\frac{\sum{m_i\vec{\mathbf{x}}_i}}{\|\sum{m_i\vec{\mathbf{\mathbf{x}}}_i}\|}$ and the total mass $\|\sum{m_i\vec{\mathbf{x}}_i}\|$ (See~\cite{galperin1993concept}).

For each spherical $d$-simplex $\Delta_i=\mathbf{v}_0\mathbf{v}_1\dots\mathbf{v}_d$ of a spherical simplicial chain $\mathcal{C}$, consider a point $\mathbf{o}'_i$ which is the circumcenter of the simplex $\Delta'_i=\mathbf{o}\mathbf{v}_0\mathbf{v}_1\dots\mathbf{v}_d$ in $\mathbb{R}^{d+1}$.
Now we can define the weighted circumcenter as the point  $\mathbf{o}_i=\left(\displaystyle \frac{\mathbf{o}'_i}{\|\mathbf{o}'_i\|}, \Vol(\Delta'_i)\|\mathbf{o}'_i\|\right)$\footnote{Using simple calculation it is easy to show that $\Vol(\Delta'_i)\|\mathbf{o}'_i\|=\frac{\Vol(\Delta_i)}{2(d+1)}$. So the circumcenter of mass from \cite{tabachnikov2014circumcenter} is the same as here.}. The $(d+1)$-dimensional complex in $\mathbb{R}^{d+1}$ formed by the simplices $\Delta'_i$ is denoted by $\mathcal{C}'$.

\begin{corollary}
	Suppose $\mathcal{C}$ is a $d$-dimensional simplicial cycle $\mathcal{C}$ in $\mathcal{S}^d\subset \mathbb{R}^{d+1}$. Then its spherical circumcenter of mass coincides with $\mathbf{o}$ (has  zero weight).
\end{corollary}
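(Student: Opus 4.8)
The plan is to make the statement fully concrete and then reduce it to the Euclidean independence-of-filling property already recorded in Corollary~\ref{cor:theorem for circumcenters} and Remark~\ref{rem:(d-1)-cycle}. First I would interpret each weighted circumcenter $\mathbf{o}_i=\bigl(\mathbf{o}'_i/\|\mathbf{o}'_i\|,\ \Vol(\Delta'_i)\|\mathbf{o}'_i\|\bigr)$ as the vector $\Vol(\Delta'_i)\|\mathbf{o}'_i\|\cdot\mathbf{o}'_i/\|\mathbf{o}'_i\|=\Vol(\Delta'_i)\,\mathbf{o}'_i$ in $\mathbb{R}^{d+1}$. Then the spherical circumcenter of mass of $\mathcal{C}$ is governed entirely by the single vector $\sum_i\Vol(\Delta'_i)\,\mathbf{o}'_i$, and its total mass is $\bigl\|\sum_i\Vol(\Delta'_i)\,\mathbf{o}'_i\bigr\|$. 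Thus the assertion that it coincides with $\mathbf{o}$ with zero weight is exactly the claim
\[
\sum_i\Vol(\Delta'_i)\,\mathbf{o}'_i=\mathbf{0}.
\]

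The key observation is that $\mathcal{C}'$ is simply a filling of $\mathcal{C}$ obtained by coning with apex $\mathbf{o}$. Since $\mathcal{C}$ is a $d$-cycle, the cone formula $\partial(\mathbf{o}*\mathcal{C})=\mathcal{C}-\mathbf{o}*\partial\mathcal{C}$ yields $\partial\mathcal{C}'=\mathcal{C}$, so $\mathcal{C}'$ is a $(d+1)$-chain in $\mathbb{R}^{d+1}$ whose boundary is $\mathcal{C}$. Each $\Delta'_i$ is a genuine $(d+1)$-simplex with circumcenter $\mathbf{o}'_i$ and volume $\Vol(\Delta'_i)$, so the ordinary Euclidean circumcenter of mass of the chain $\mathcal{C}'$ is
\[
\frac{\sum_i\Vol(\Delta'_i)\,\mathbf{o}'_i}{\sum_i\Vol(\Delta'_i)}.
\]

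I would then invoke independence of the filling. By Remark~\ref{rem:(d-1)-cycle}, the circumcenter of mass of the $d$-cycle $\mathcal{C}$, all of whose vertices lie on $\mathcal{S}^d$, may be computed from any filling. Choosing a filling whose vertices are exactly the vertices of $\mathcal{C}$, hence all lying on the sphere $\mathcal{S}^d$ of center $\mathbf{o}$, every simplex of that filling is inscribed in $\mathcal{S}^d$ and so has circumcenter $\mathbf{o}$; its circumcenter of mass is therefore $\mathbf{o}$ itself. The difference of two fillings of the same cycle is a $(d+1)$-cycle, so applying Corollary~\ref{cor:theorem for circumcenters} to this cycle forces the displayed quotient to equal $\mathbf{o}$, the origin, whence $\sum_i\Vol(\Delta'_i)\,\mathbf{o}'_i=\mathbf{0}$ and the total mass vanishes.

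The only genuinely delicate points are bookkeeping rather than geometry: one must check that the cone over $\mathcal{C}$ is an admissible $(d+1)$-chain with $\partial\mathcal{C}'=\mathcal{C}$, so that it and the inscribed filling share the boundary to which Corollary~\ref{cor:theorem for circumcenters} applies, and one must appeal to the existence of a filling of $\mathcal{C}$ using only its own vertices on $\mathcal{S}^d$, which is precisely the observation recorded just above the spherical construction. Both ingredients are available, so the argument closes.
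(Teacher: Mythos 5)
Your proposal is correct and follows essentially the same route as the paper: identify the spherical circumcenter of mass of $\mathcal{C}$ with the Euclidean circumcenter of mass of the cone $\mathcal{C}'$, then use independence of the filling (Corollary~\ref{cor:theorem for circumcenters} via Remark~\ref{rem:(d-1)-cycle}) together with an inscribed filling of $\partial\mathcal{C}'=\mathcal{C}$, all of whose simplices have circumcenter $\mathbf{o}$, to conclude $\sum_i \Vol(\Delta'_i)\,\mathbf{o}'_i=\mathbf{0}$. Your write-up merely makes explicit the bookkeeping (the cone boundary formula and the reduction of ``zero weight'' to the vanishing of this vector sum) that the paper's two-line proof leaves implicit.
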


\begin{proof}
	By definition, the spherical circumcenter of mass $\mathcal{C}$ coincides with the Euclidean circumcenter of mass of $\mathcal{C}'$. But its circumcenter of mass coincides with the circumcenter of mass of $\partial \mathcal{C}'$ which is the origin, because $\partial \mathcal{C}'$ is inscribed in $\mathcal{S}^d$.	
\end{proof}

As in Remark~\ref{rem:(d-1)-cycle}, we can define the circumcenter of mass of a $(d-1)$-dimensional spherical simplicial cycle in $\mathcal{S}^d$ as the circumcenter of mass of its filling.

\bigskip
The author thanks Sergei Tabachnikov for useful discussions and valuable advice.
	
%
%

\end{document}